\documentclass{amsart}

\usepackage{amsmath}
\usepackage{amsfonts}
\usepackage{amsthm}
\usepackage{amssymb}
\usepackage{latexsym}
\usepackage{upref}
\usepackage{mathrsfs}
\usepackage{eufrak}
\usepackage{enumerate}

\newtheorem{theorem}{Theorem}[section]
\newtheorem{lemma}[theorem]{Lemma}
\newtheorem{axiom}{Axiom}[section]

\theoremstyle{definition}
\newtheorem{definition}[theorem]{Definition}

\theoremstyle{remark}
\newtheorem{remark}[theorem]{Remark}

\numberwithin{equation}{section}

\begin{document}
\title{Generalized measurement on size of set}
\author{{Hua-Rong Peng, Da-Hai Li}}
\address{School of Electronics and Information Engineering, Sichuan University, Chengdu 610065, China.}
\author{{Qiong-Hua Wang}}
\address{School of Electronics and Information Engineering, Sichuan University, Chengdu 610065, China. Key Laboratory of Fundamental Synthetic Vision Graphics and Image for National Defense, Sichuan University, Chengdu 610065, China.}
\thanks{Email: B100425@stu.scu.edu.cn}
\keywords{cardinality, size, dimension, measure, one-to-one correspondence, continuum hypothesis, infinity.}

\pagestyle{headings}
\markboth{{\rm H. R. PENG \etal}}{{\rm Generalized measurement on size of set}}
\begin{abstract}
{We generalize the measurement using an expanded concept of cover, in order to provide a new approach to size of set other than cardinality. The generalized measurement has application backgrounds such as a generalized problem in dimension reduction, and has reasons from the existence of the minimum of both the positive size and the positive graduation, i.e., both the minimum is the size of the set $\{0\}$. The minimum of positive graduation in actual measurement provides the possibility that an object cannot be partitioned arbitrarily, e.g., an interval $[0, 1]$ cannot be partitioned by arbitrarily infinite times to keep compatible with the minimum of positive size. For the measurement on size of set, it can be assumed that this minimum is the size of $\{0\}$, in symbols $|\{0\}|$ or graduation $1$. For a set $S$, we generalize any graduation as the size of a set $C_i$ where $\exists x \in S (x \in C_i)$, and $|S|$ is represented by a pair, in symbols $(\emph{C}, N(\emph{C}))$, where $\emph{C} = \cup \{C_i\}$ and $N(\emph{C})$ is a set function on $C_i$, with $C_i$ independent of the order $i$ and $N(\emph{C})$ reflecting the quantity of $C_i$. This pair is a generalized form of box-counting dimension. The yielded size satisfies the properties of outer measure in general cases, and satisfies the properties of measure in the case of graduation $1$; while in the reverse view, measure is a size using the graduation of size of an interval. As for cardinality, the yielded size is a one-to-one correspondence where only addition is allowable, a weak form of cardinality, and rewrites Continuum Hypothesis using dimension as $\omega \dot |\{0,1\}| = 1$. In the reverse view, cardinality of a set is a size in the graduation of the set. The generalized measurement provides a unified approach to dimension, measure, cardinality and hence infinity.}
\end{abstract}
\maketitle

\vspace*{6pt}
\section{Introduction}
\label{intro}
\setcounter{equation}{0}

In pure mathematics the size of set is reflected by cardinality, while in applications the size of an object is known by measurement. In general cases, the result of measurement is dependent on the scale at which the measurement is conducted, e.g., let the object be the balls to define Hausdorff dimension and the size depends on the radius of the balls. Let the object be the set, then the dependence provides the possibility that the size of set depends on the measurement, e.g., the size of $\{1\}$ is $1$ if one measures the set using $\{1\}$, and the size approaches $0$ if using $[0, 1]$. This association between measurement and the size of set is reasonable since that the size of set is now not only a pure problem but also an applicable problem in such applications as dimension reduction (see \cite{Zhang2010}) as follows.

Let finite $N$ points distribute in a space $\mathfrak{S}^D$, and dimension reduction maps these points to another space $\mathfrak{S}^d$, where $d~\le~D$ and dimension reduction preserves the quantity of the points. Let's denote the points in $\mathfrak{S}^D$ as a set $A^D$ and the points in $\mathfrak{S}^d$ as another set $A^d$, then dimension reduction requires the identity between the two sizes of sets, in symbols $|A^D| = |A^d| = N$. The identity is given in cardinal theory if $A^D$ can be put into one-to-one correspondence with $A^d$. Measure theory except counting measure regards the set of finite point as zero measure set; the quantity of finite points is non-sense. Counting measure doesn't distinguish the identity if $N\to \infty$.

According to Euclid's definition 1 that point is of no length, points are with radius $r_i \to 0$. Let's consider a general case where the points are generalized as balls with radius $r_i \ge 0$ and where dimension reduction preserves the quantity of balls, i.e., $N$ balls in $\mathfrak{S}^D$ is mapped to $N$ balls in $\mathfrak{S}^d$. Cardinal theory gives more than the identity because it gives identity between $N$ balls and arbitrary positive number of balls. Measure theory except counting measure gives the quantity $N$; counting measure gives $N$ if $N < \infty$.

The difference between cardinal theory and measure theory suggests the gap between pure mathematics and applied mathematics. In the former one can assume the existence of continuum and partition the continuum in any arbitrary small scale, while in the latter one cannot apply so many properties of continuum since that the partition is limited by either the maximum or the minimum of scale. For an example, let half of the circle projected to line segment $[0, 1]$, and then the minimum of distance between two arbitrary points in the segment can be arbitrary small if the segment is a continuum, while the minimum which makes sense in applications is the minimum of graduation. Of course in applications one can decrease the minimum; the minimum still exists. The minimum of graduation in actual measurement provides the possibility that an object cannot be partitioned arbitrarily. And dimension reduction provides the application background to bridge the gap if it deals with balls including points. But how to bridge it? In this paper we assume that these balls can be described by reals, and constrain the discussion on the set of positive reals.

We consider the size of set from the view of measurement. First, we take the set $S$ as the measured object. Second, we use other sets $C_i$ to cover $S$, where the used sets are not limited to be a subset in $P(S)$, with the meaning of cover in mathematics expanded. Third, we represent the size of $S$ using both $C_i$ and the quantity of $C_i$. Given that the minimum of the size exists, the measurement has constraints. The main results are as follows:
\begin{enumerate}
\item The yielded size is a generalized form of box-counting dimension;
\item The yielded size satisfies the properties of outer measure in general cases, and satisfies the properties of measure if graduation is graduation $1$;
\item The yielded size is a one-to-one correspondence where only addition is allowable, a weak form of cardinality;
\item Continuum Hypothesis is $\omega \dot |\{0,1\}| = 1$, where $\omega$ is the cardinality of the set of all natural numbers, $|\{0,1\}|$ in the left side is the size of a set which has only two elements, and $1$ in the righthand is dimension $1$.
\end{enumerate}

Note that we constrain the discussion on positive reals and that $\mathbb{R}$ means $\mathbb{R^+} \cup \{0\}$. The rest of the paper is organized as follows. In Section~\ref{sos} we unfold the generalized measurement, size of set and what the size requires such as both axioms and definitions. In Section~\ref{rwm} we explore the relation between the size and measure theory. In Section~\ref{rwc} we explore the relation between the size and cardinality. In Section~\ref{conclu} we present our conclusions.

\vspace*{5pt}
\section{Generalized Measurement and Size of Set}
\label{sos}

\setcounter{equation}{0}
\subsection{Motivations of size}
\label{mos}

Expand the concept of cover as follows:
\begin{definition}
Set $\emph{C}$ is a cover on set $A$, if,
\begin{enumerate}
\item $\varnothing \in \emph{C}$;
\item $A \subseteq \cup C_i$, where $i \in I$;
\item $\forall C_i \in \emph{C}$;
\item $\forall C_i (A \cap C_i \ne \varnothing)$.
\end{enumerate}
\end{definition}
Hereafter we use cover in this sense. Given two measurable sets, in symbols $A$ and $B$, recall that the measure of $A$ becomes $\mu_A / \mu_B$ if one covers the set $A$ using set $B$ by $\mu_A / \mu_B$ times and takes $\mu_B$ as unit; this is not a circular definition both if $\mu_B$ is defined and if $\mu_B \ne 0$. With infinite times allowable, we cover the set of points with balls of radius $r \to 0$ by $N(r)$ times where $r \in \mathbb{R}$, in symbols $(r, N(r))$ with $r \to 0$, abbreviated as $(r \to 0, N(r))$, and cover the set of balls with balls of radius $r \ge 0$ with $N(r)$ times, in symbols $(r \ge 0, N(r))$. This yields such instances of pair $(r, N(r))$ as $(r \to 0, 0)$, $(r \to 0, 1)$, $(r \to 0, 2)$, $(r \ge 0, \pi r^2)$, $(r \ge 0, r^D)$, etc. This is motivated by both the concept of fractal dimension (see \cite{Mandelbrot1967}) and the fact that minimum of graduation exists in applications.

Consider the set $\varnothing$ and the set $\{\varnothing\}$. We have $|\{\varnothing\}| > |\varnothing|$ if $r<\infty$, i.e.:
\begin{equation}\label{r10}
(r, 1) > (r, 0),
\end{equation}
where the pair $(r, 0)$ corresponds to size of $|\varnothing|$ and the pair $(r, 1)$ corresponds to size of $|\{\varnothing\}|$. Though dimension of $\varnothing$ is defined as $-1$ and dimension of $\{\varnothing\}$ is defined as $0$ in topology, we have to define the two dimensions as follows according to equation \ref{r10} since that the measurement with $r < \infty$ gives the size of $\varnothing$ as $0$ and gives the size of $\{\varnothing\}$ as $1$, to both keep compatible with measure theory and add convenience to the comparison between pairs.
\begin{equation}\label{00}
(r, 0) = 0.
\end{equation}
\begin{equation}\label{1ge0}
(r, 1) > 0.
\end{equation}
The pairs satisfy that, if using the same $r$ and supposing $(N_0 - N_1)$ is defined:
\begin{equation}
(r, N_0) - (r, N_1) = (r, N_0 - N_1).\\
\end{equation}
Then applying equation \ref{00} gives that:
\begin{equation}
(r, N_0) > (r, N_1), \mbox{if}\: N_0 > N_1.
\end{equation}
If $(r, 0)$ were defined as $-1$ and $(r, 1)$ were defined as $0$, the pairs cannot distinguish sizes of finite sets. Such a need for finite sets excludes the requirement on dimension that countable unions of points have dimension $0$.

Consider the case where $r \to \infty$. Then $\{0, 1\}$ cannot be distinguished from the set of a single point such as $\{0\}$, with both pairs as $(r, 1)$, though $\{0, 1\}$ is not identical to $\{0\}$. This gives reason for a line drawn between indistinguishability and identity.

Consider the similarity between the pair $(r, N(r))$ and box-counting dimension (see \cite{Theiler1990}) which assigns $0$ to $(r, 1)$. Write $(r, N(r))$ except $(r \to 0, 0)$ and except $(r \to 0, 1)$ as follows, if $r < 1$,
\begin{definition}\label{defofD}
(r, N(r)) = log(N(r))/log(1/r).
\end{definition}
Then $(r, N(r)) \not\subset \mathbb{R}$ since that the righthand doesn't always convergent. The pair still makes sense since that reals cannot meet the need for finite set. If a measurable set corresponds to pair $(r, N(r))$, box-counting dimension can be obtained by $r \to 0$.

If box-counting dimension of any strictly fractal set coincides with Hausdorff dimension, equation \ref{defofD} contains the value domain of Hausdorff dimension; moreover, one has $\mathbb{R} \subset (r, N(r))$ since that Hausdorff dimension gives reals by the Hausdorff dimension theorem (see \cite{Soltanifar2006}). Denote box-counting dimension and Hausdorff dimension as $D_{BC}$ and $D_H$, respectively, and then one has that,
\begin{lemma}\label{lemdhdbc}
$\forall D_H \ge 0$, $D_H \in \mathbb{R}$, $\exists D_{BC} \in \mathbb{R} (D_{BC} = D_H)$.
\end{lemma}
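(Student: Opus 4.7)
The plan is to leverage the two ingredients already named in the paragraph preceding the lemma: (i) the Hausdorff dimension theorem (cited as \cite{Soltanifar2006}), which asserts that every nonnegative real is realized as the Hausdorff dimension of some subset of Euclidean space, and (ii) the fact that on strictly fractal (in particular self-similar) sets, box-counting dimension coincides with Hausdorff dimension. Combining these two facts makes the lemma essentially a realizability statement: for each target $D_H \ge 0$ in $\mathbb{R}$, exhibit a single set $F$ whose box-counting dimension equals $D_H$.

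First, I would fix an arbitrary $D_H \ge 0$ with $D_H \in \mathbb{R}$ and appeal to the Hausdorff dimension theorem to obtain a set $F$ with $\dim_H(F) = D_H$. If one takes $F$ to be a standard self-similar construction (for example a generalized Cantor set in $[0,1]$ or in $\mathbb{R}^n$ with contraction ratios chosen so that the similarity-dimension equation yields $D_H$), then $F$ is strictly fractal in the sense used in the excerpt. Second, I would invoke the hypothesis stated just before the lemma: on such strictly fractal sets $D_{BC}(F) = D_H(F)$. This immediately gives $D_{BC}(F) = D_H$, so setting $D_{BC} := D_{BC}(F)$ produces the required witness in $\mathbb{R}$.

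The main obstacle is the second step: the Hausdorff dimension theorem by itself only guarantees \emph{some} set realizing the prescribed value, with no a priori control on its box-counting dimension (which in general can exceed the Hausdorff dimension). So one has to be careful to pick the realizing set inside a class where $D_{BC}$ and $D_H$ are known to coincide. The cleanest way is to use a self-similar iterated function system satisfying the open set condition, where both dimensions are given by the same Moran equation and hence automatically agree. This avoids any appeal to more delicate regularity (such as Ahlfors regularity) while still covering the full range $[0,\infty)$ of possible values of $D_H$.

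Once $F$ is produced, the conclusion is immediate and no further computation is needed; the content of the lemma is entirely in the choice of the realizing set, which is why I would keep the proof short and point the reader to the self-similar construction rather than reproving the Hausdorff dimension theorem in place.
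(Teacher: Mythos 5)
Your proposal is correct and follows essentially the same route as the paper: invoke the Hausdorff dimension theorem to realize each value as the Hausdorff dimension of a (self-similar) fractal, then use the coincidence of box-counting and Hausdorff dimension on such self-similar sets. The only cosmetic difference is that the paper treats $D_H = 0$ as a separate case via the set $\{0\}$, whereas you absorb it into the general construction, and you make explicit (rightly) that the realizing set must be chosen within a class, e.g.\ self-similar sets with the open set condition, where the two dimensions agree.
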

\begin{proof}
\begin{enumerate}
\item For $D_H = 0$, the set $\{0\}$ has both $D_H$ and $D_{BC}$ as $0$.
\item By the Hausdorff dimension Theorem (see \cite{Soltanifar2006}), for any real number $x > 0$, there exists a fractal set with Hausdorff dimension $x$ in $n$-dimensional Euclid's space where $n$ satisfies that $-[-x] \le n$. Box-counting dimension of the fractal set is also $x$, using the self-similarity property of fractal set.
\end{enumerate}
\end{proof}

Note that in this subsection, radius $r$ is expressed in the terms of measure theory, e.g., $r \to 0$ means measure of a set corresponding to $r$ is zero in the sense of limit. By associating $r$ with cardinality, one knows that $\{0\}$ implies $r \to 0$ since that the sets with finite cardinality are zero measure set.

\subsection{Concept of pair}
\label{cop}

There exists a class denoted by $\mathfrak{M}$ which satisfies that:
\begin{axiom}\label{defofM}
$\forall x \in \mathfrak{M} ( x\:\mbox{mod}\:1 = 0 )$.
\end{axiom}
Here $1$ is the size of the set of single point.The set of natural numbers ensures that $\mathfrak{M} \ne \varnothing$.

If the elements in any set are counted, the minimum of graduation which makes sense is the size of the set of single point, denoting the minimum of graduation as graduation $|\{0\}|$ or graduation $1$. In general cases, the graduation is an element in $\mathfrak{M}$. We make this as an assumption:
\begin{axiom}\label{assofgradua}
Graduation in the measurement on size of set is an element in $\mathfrak{M}$.
\end{axiom}
For convenience for the measurement on non-empty set, we take graduation $1$ as the minimum graduation, to make that $\forall x \in \mathfrak{M} (x > 0)$. This convention can yield such negative elements in $\mathfrak{M}$ as $|A-B|$ where $A \subset B$, if in this case $(A-B)$ is defined as the inverse element of $(B-A)$, satisfying $|A-B|+|B-A| = 0$. To keep compatible with measure theory, we still keep $A-B$ defined as $\varnothing$ if $A \subset B$. Of course one can define $0$ or even negative number as the minimum.

As for the result of measurement on size of set, no part below the graduation is possible if the graduation is the minimum, i.e., the result is still an element in $\mathfrak{M}$. For other graduations, choose the ceil of the result, to keep the result in $\mathfrak{M}$. Denoting the result as $N$ yields that:
\begin{axiom}
$N \in \mathfrak{M}$.
\end{axiom}
Then all results are elements in $\mathfrak{M}$.

Any set can be measured using the graduation $1$ if it can be measured by graduation $g \ge 1$, where $g \in \mathfrak{M}$, due to a property of operation $mod$. But is there a set which cannot be measured? Cantor has assumed that any set has a cardinality. For a set, we assume that:
\begin{axiom}
Any set can be measured by graduation $1$.
\end{axiom}

Denote the result by $\mathfrak{Cs}$ to differ the result from both cardinal number denoted by $Ca$ and cardinality with Axiom of choice denoted by $PO$. According to the definition of $Ca$ and that of $PO$, where the former bases itself on bijection and the latter bases itself on the method of one-to-one correspondence, any element in either $Ca$ or $PO$ is a result of measurement which obeys Axiom \ref{assofgradua}. Recall that both bijection and one-to-one correspondence can be written as $a \leftrightarrow b$, i.e. $(r, 1) \leftrightarrow (r, 1)$ since that $\{a\}$ is identical to $\{b\}$, and it can be shown easily that:
\begin{lemma}
$(Ca \subset  Cs) \cup (PO \subset Cs)$.
\end{lemma}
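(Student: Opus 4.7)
The plan is to show both $Ca \subset Cs$ and $PO \subset Cs$ by exhibiting, for each element of $Ca$ or $PO$, a generalized measurement in the sense of Subsection~\ref{cop} that produces the same value. The guiding observation, already flagged in the paragraph before the lemma, is that a bijection or a one-to-one correspondence pairs individual elements with individual elements; at the level of singletons this is literally the identification $(r,1) \leftrightarrow (r,1)$, which is precisely the graduation-$1$ measurement of a singleton and so falls under Axiom~\ref{assofgradua}.

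First I would fix a set $S$ and an element $c \in Ca$ (respectively $c \in PO$) witnessed by a bijection (respectively one-to-one correspondence) $f \colon S \to T$ with $T$ a canonical representative. By the axiom that every set can be measured by graduation $1$, I decompose a cover $\emph{C}$ of $S$ into its atoms $C_i = \{x_i\}$, together with $\varnothing \in \emph{C}$; each nonempty atom contributes $(r,1)$ and the empty atom contributes $(r,0)=0$, so the aggregated size of $S$ is the pair $(r,N(\emph{C}))$ with $N(\emph{C}) \in \mathfrak{M}$. Next I would transport this cover across $f$ by $\{x\} \mapsto \{f(x)\}$; this sends atoms to atoms and preserves both the graduation $1$ and the count, so $S$ and $T$ receive the same pair $(r,N)$. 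Consequently the $Cs$-value depends only on the equivalence class of $S$ under $f$, which is exactly $c$. Hence $c$ lies in $Cs$, giving both containments.

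The main obstacle I expect is not the finite or well-orderable case, where $N(\emph{C})$ is a natural number or a transparent ordinal, but the infinite case where one must be confident that a cover of $S$ by all of its singletons is legitimate in the expanded sense of the paper's first definition and that the resulting $N(\emph{C})$ is a well-defined element of $\mathfrak{M}$. I would short-circuit this by appealing directly to the axiom asserting that every set is measurable at graduation $1$; the bijection $f$ then only has to be checked to respect whatever cover the axiom supplies, which it does atomwise. With that appeal in place the argument becomes the straightforward bookkeeping that the authors' phrase \emph{can be shown easily} suggests.
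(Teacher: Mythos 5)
Your proposal is correct in the paper's sense and follows essentially the same route: the paper's justification is precisely the remark that any element of $Ca$ or $PO$ arises from a bijection or one-to-one correspondence written atomwise as $(r,1)\leftrightarrow(r,1)$, i.e.\ a graduation-$1$ measurement obeying Axiom~\ref{assofgradua}, hence lies in $\mathfrak{Cs}$. Your singleton-cover decomposition and transport along $f$ is just a more explicit bookkeeping of that same observation, resting on the same axiom that every set can be measured by graduation $1$.
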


%

$\mathfrak{M}$ satisfies that, using the property of operation $mod$:\\
\indent
$\forall x \in \mathfrak{M}\: \forall y \in \mathfrak{M}$,
\begin{enumerate}
\item $x+y \in \mathfrak{M}$, \mbox{with union allowable};
\item $x-y \in \mathfrak{M}$, \mbox{with intersection allowable};
\item $2^x \in \mathfrak{M}$, \mbox{with power set allowable};
\item $x\dot y \in \mathfrak{M}$, \mbox{with pairing allowable}.
\end{enumerate}

Let the pair denoted by $\mathfrak{R}$ which is defined as follows.
\begin{definition}
Given an arbitrary pair $(r, N(r))$ with $r \in \mathfrak{M}$ and $N(r) \in \mathfrak{M}$, it is a pair in $\mathfrak{R}$, if $r > 0$ and if $N(r): \mathfrak{M} \to \mathfrak{M}$ is a function on $r$.
\end{definition}
Radius $r$ has a definite physical meaning that the smaller $r$ can be, the smaller the minimum of graduation is. Counting of $r$, in symbols $N(r)$, is not counting measure as $N(r)$ can refer to infinity, e.g., $N(r)$ is $\omega$ both if $r=1/2$ and if $\omega$ is counted. Now consider the common place among $N(r)$. They satisfy that $N(r)\: mod\: 1 = 0$.

Equation \ref{defofD} includes reals. Let the given reals denoted by $\mathcal{R} \subset \mathbb{R}$. This yields that:
\begin{equation}
(\mathcal{R} = \mathbb{R}) \cup (\mathcal{R} \subset \mathfrak{R}).
\end{equation}
\begin{proof}
\begin{enumerate}
\item $\mathcal{R} = \mathbb{R}$. For real number $x = 0$, there exists a pair $(r, 0) = 0$. According to definition \ref{defofD} and by Lemma \ref{lemdhdbc}, for any real number $x >0 $, there exists a set of which the pair is $(r, r^x) = x \in \mathcal{R}$, i.e., $\mathbb{R} \subset \mathcal{R}$. We have that $\mathcal{R} = \mathbb{R}$.
\item $\mathcal{R} \subseteq \mathfrak{R}$. $\mathcal{R}$ reflects the size of set which is measurable, then we have that $\forall x \in \mathcal{R} (x \in \mathfrak{R})$. According to principle of extensionality, $\mathcal{R} \subseteq \mathfrak{R}$.
\item $\mathcal{R} \ne \mathfrak{R}$. Consider the case of sets where $|A_i| = i$, $i$ \mbox{is finite}, and $B_1 = {x | x \in (0, 1]}$. These sets are measurable and bounded; they can be dealt with equation \ref{defofD}; assume that $\mathcal{R} = \mathfrak{R}$, then the pairs all correspond to reals. We know that the pair of $B_1$ corresponds to $1$, and suppose that $(r, 2) = c$ with $r \to 0$, where $0<c<1$ since that $|A_2| > |\varnothing|$. Let $r < 1/n$. According to the Archimedean property of real field, there exists such a set $A_n$ that it satisfies $|A_n| = n = \lceil1/c\rceil\cdot c>1$. But the inequity $n<1/r$ gives that $|A_n| < 1$. There yields a contradiction. Hence the assumption that $\mathcal{R} = \mathfrak{R}$ contains inconsistency, and we have that $\mathcal{R} \ne \mathfrak{R}$.
\end{enumerate}
\end{proof}

Then we can discuss the order of $\mathfrak{R}$ as follows,\\
\indent
$\forall x, y, z\in \mathfrak{R}$,
\begin{enumerate}
\item $x > y$\: \mbox{and} $y > z$\: \mbox{imply that} $x > z$;
\item $\forall a \in \mathcal{R}\forall b \in\mathcal{R}(a \ne b\to \exists x(a < x < b))$.
\end{enumerate}

The property that any pair can be compared with another, i.e. $\forall x \forall y( x > y \cup x = y \cup x < y)$, needs Axiom of Choice (AC). To see this, recall that AC presents one of its form as that every cardinality defined by one-to-one correspondence is comparable. Reduce $r$ in such steps that in each step $r$ is the radius of a nested interval; then as the nested interval intends to a point, the pair $(r, N(r))$ coincides itself with one-to-one correspondence. Unmeasurable set necessitates AC, but any set of reals can be measurable both if inaccessible cardinal exists and if AC is not allowed (see \cite{Solovay1970}). We prefer the latter theorem that every set is measurable, hence the comparability in $\mathfrak{R}$ is not adopted, and we obtain weak forms of the comparability as follows,\\
\indent
$\forall x, y \in \mathfrak{R}$,
\begin{enumerate}
\item $\forall x \forall y( x > y \cup x = y \cup x < y)$, \mbox{if the respective sets are measurable}.
\item $x \le y$, if $|s_x| = x$, $|s_y| = y$, and $s_x \subset s_y$.
\end{enumerate}

In general $\mathfrak{R}$ is related to $r$, while there are reals which have no connection to the pair, i.e. the constants which act on the elements. Since that $\mathcal{R}=\mathbb{R}$, hereafter let such reals denoted by $\mathbb{R}$, and let reals related to the pair denoted by $\mathcal{R}$.

\begin{definition}
Suppose that $x$, $y\in\mathfrak{R}$, define,
\begin{enumerate}
\item $x + y = (r, N_x + N_y)$, where $x = (r, N_x)$, $y = (r, N_y)$;
\item $x - y = (r, N_x - N_y)$, where $x = (r, N_x)$, $y = (r, N_y)$, and $N_x \le N_y$;
\item $x \cdot y = (r \cdot r, N_x \cdot N_y)$, if $x = (r, N_x)$, $y = (r, N_y)$;
\item $c \cdot y = (r, {N_y}^c)$, if $c \in \mathbb{R}$, $y = (r, N_y)$, and $N_y \ne 1$;
\item $c \cdot y \ne (r, 1^c)$, if $x \in \mathbb{R}$, $y = (r, 1)$.
\end{enumerate}
\end{definition}

$\mathfrak{R}$ constructs a distance space if distance in $\mathfrak{R}$ is defined as follows,
\begin{displaymath}
d(x, y) =\left \{\begin{array}{ll}
x-y & \textrm{if $x \ge y$,}\\
y-x & \textrm{if $y \ge x$,}\end{array}\right.
\end{displaymath}
since that it satisfies axioms of distance as follows:\\
\indent
$\forall x = (r, N_x(r))$, $y = (r, N_y(r))$, $z = (r, N_z(r))\in \mathfrak{R}$,
\begin{enumerate}
\item $d(x, y) = 0\iff x = y$;
\item $d(x, y) = d(y, x)$;
\item $d(x, y) \le d(x, z) + d(z, y)$.
\end{enumerate}

\subsection{Size of set using cover}
\label{sosuc}

In this subsection we define the size of set using the cover of set. The size of set is defined as follows.
\begin{definition}
Given a set $S$ and its cover $\emph{C}=\{C_i: i\in I\}$, the size of $S$, in symbols $|S|$, is $(\emph{C}, N(\emph{C}))$, where $N(\emph{C}): \emph{C} \to \mathfrak{M}$ is a set function on $\emph{C}$.
\end{definition}
\begin{remark}
\begin{enumerate}
\item $|\varnothing| = (\emph{C}, 0) = 0$;
\item $|\{0\}|=(\emph{C}, 1) > 0$;
\item For a set $U$ which is unmeasurable, $U$ is a cover for $U$ itself, then the size of $U$ is $(U, 1)$.
\end{enumerate}
\end{remark}


The cover should be invariant to translation, e.g., the cover to $\{1\}$ should be a cover to $\{2\}$ if in $\{1\}$ the element is replaced by the element in $\{2\}$. Indeed it has this property because a set is determined by the elements in the set no matter the order of the elements according to principle of extensionality on set.
\begin{definition}
\mbox{Set} $C_1$ \mbox{is a translation to set} $C_2$, \mbox{if} $[ \forall x \in C_1 \exists y \in C_2 (x + e_1 = y) \cap (\forall y \in C_2 \exists x \in C_1 (x + e_1 = y)]$, \mbox{where} $e_1 \in \mathbb{R}$ \mbox{are constants}.
\end{definition}

For an arbitrary set $S$, there exists a finite cover $\emph{C}$ on $S$.
\begin{theorem}
$\forall S \exists \emph{C}(S \subset \emph{C} \cap N(\emph{C}) < \infty)$, where $\emph{C}$ is a cover on $S$.
\end{theorem}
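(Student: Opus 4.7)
The plan is to exhibit an explicit finite cover, exploiting the fact that the expanded notion of cover in Definition~2.1 places no restriction on the ``scale'' of the individual $C_i$: we are free to take the largest possible covering set, namely $S$ itself. This is in sharp contrast to the fractal setting, where cover sets must be balls of some fixed radius $r$ and an unbounded $S$ would force $N(\emph{C}) = \infty$; here no such constraint is imposed.

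Concretely, for a non-empty set $S$ I would take $\emph{C} = \{\varnothing, S\}$ and verify the four clauses of the cover definition in turn: $\varnothing \in \emph{C}$ by construction; $S \subseteq \bigcup_{C_i \in \emph{C}} C_i = \varnothing \cup S = S$; each element of $\emph{C}$ is a bona fide set (so clause~(3) is vacuous); and the only non-empty member of $\emph{C}$ is $S$, with $S \cap S = S \ne \varnothing$. Consequently $N(\emph{C}) \le 2 < \infty$, and $S \subseteq \bigcup C_i$ is the intended reading of ``$S \subset \emph{C}$''. For the degenerate case $S = \varnothing$ take $\emph{C} = \{\varnothing\}$, giving $N(\emph{C}) = 1$.

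I expect no real obstacle here, because the permissive cover definition does all the work; the content of the theorem is really the existential assertion that \emph{some} finite representation $(\emph{C}, N(\emph{C}))$ always exists, ensuring that the size $|S|$ introduced in Section~\ref{sosuc} is never vacuously undefined. The one point I would pause on is the apparent tension between clause~(1) of the cover definition (which forces $\varnothing \in \emph{C}$) and clause~(4) (which demands $S \cap C_i \ne \varnothing$ for every $C_i$); I would read clause~(4) as ranging only over the non-empty $C_i$, since otherwise even $\{\varnothing\}$ would fail to be a cover of $\varnothing$. Under either reasonable interpretation the cover exhibited above satisfies all conditions, so the conclusion $N(\emph{C}) < \infty$ follows immediately.
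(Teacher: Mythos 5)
Your construction is correct under the paper's (admittedly loose) cover definition, but it is a genuinely different route from the one the paper takes. You exhibit the trivial cover $\{\varnothing, S\}$ directly and get $N(\emph{C}) \le 2 < \infty$ at once; this is consistent with the paper's own remark in Section~\ref{sosuc} that an unmeasurable set $U$ is covered by itself with size $(U,1)$, and your reading of clause~(4) as ranging over the non-empty $C_i$ is the only way to reconcile it with clause~(1), so that interpretive step is fair. The paper instead argues through the ring of sets generated by $S$: it invokes the ring $R_0$ with $S \subset R_0$, considers larger rings $R(S)$, and observes that any member of such a ring can be covered by a union of finitely many subsets drawn from $S$, extracting a finite cover from that. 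What your argument buys is economy --- nothing is needed beyond the permissive cover definition, and the existential claim $\exists \emph{C}\,(N(\emph{C}) < \infty)$ is settled immediately. What the paper's (rather informally stated) argument aims for is a finite cover whose members are subsets related to $S$ itself, i.e.\ a cover that refines $S$ rather than swallowing it in one block, which is closer in spirit to a measurement at a finer graduation; but as a proof of the bare statement as written, your simpler construction suffices.
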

\begin{proof}
Recall the ring $R_0$ generated by arbitrary set $S$, i.e., $\forall S \exists !R_0 (S\subset R_0)$. Consider other rings which contain $S$, in symbols $R(S)$, which satisfy that $R_0 \subset R(S)$. Note that any subset in $R(S)$ can be covered by the union of finite subsets in $S$; denote the cover by $\emph{C}$, and $N(\emph{C})<\infty$. Hence it yields that $\forall S \exists !R_0 (S \subset R_0 \subset R(S) \subset \emph{C} (N(\emph{C}) < \infty))$.
\end{proof}

The pair $(r, N(r))$ is defined as follows.
\begin{definition}
Given a set $S$ and its size $(C, N(C))$, $|S|=(r, N(r))$ if any of $\{C_i: i \in I, C_i \in \emph{C}\}$ is a translation to each other.
\end{definition}

\begin{definition}
Given cover $\emph{C}_1$ and cover $\emph{C}_2$,  $\emph{C}_1$ is equivalent to $\emph{C}_2$ if each set in $\emph{C}_1 - \emph{C}_1 \cap \emph{C}_2$ has a translation in $\emph{C}_2 - \emph{C}_1 \cap \emph{C}_2$, and vice verso.
\end{definition}

\begin{definition}\label{eqofc}
Given set $S_1$ and set $S_2$, they are equivalent to each other in size in the sense of cover $\emph{C}=\{C_i: i\in I\}$, in symbols $|S_1|\overset{\emph{C}}{=}|S_2|$, if,
\begin{enumerate}
\item $\emph{C}$ covers $S_1$,
\item a cover equivalent to $\emph{C}$ covers $S_2$.
\end{enumerate}
\end{definition}

\begin{definition}\label{eqofsize}
$|S_1| = |S_2|$, if $|S_1|\overset{g=1}{=}|S_2|$.
\end{definition}
\begin{remark}
Definition \ref{eqofsize} is not trivial since it is designated for the precise meaning of identity, while Definition \ref{eqofc} is for the meaning of indistinguishability. Size of set varies with the scale in which the measurement is conducted. Suppose that in a scale the size of one set cannot be distinguished from the size of another set, then does this indistinguishability assure the precise identity between the two sizes? A known example is the relation between $|\{n: n\in \mathbb{N}\}|$ and $|\{2*n: n\in \mathbb{N}\}|$. They are not equivalent in the scale of $1$ while they are equivalent in the scale of $\mathbb{N}$. Is the two sets identical to each other? Not identical, but indistinguishable in the sense of $\mathbb{N}$.
\end{remark}

$\mathfrak{R}$ obeys the same rule of elementary operation on size.
\begin{definition}
Suppose that $\mathfrak{k}$, $\mathfrak{\lambda}\in \mathfrak{R}$, define,
\begin{enumerate}
\item $\mathfrak{k}+\mathfrak{\lambda}=|S_1\cup S_2|$, where $|S_1=\mathfrak{k}|$, $|S_2=\mathfrak{\lambda}|$, and $S_1\cap S_2=\varnothing$;
\item $\mathfrak{k}\cdot\mathfrak{\lambda}=|S_1\times S_2|$, where $|S_1=\mathfrak{k}|$, $|S_2=\mathfrak{\lambda}|$.\\
\end{enumerate}
\end{definition}

\section{Relation with measure theory}
\label{rwm}
\setcounter{equation}{0}

For a set $A$ and its cover $C$, suppose $\mu^*: A \to \mathfrak{R}$ is a set function on $A$, then $\mu^*$ satisfies that:
\begin{enumerate}
\item \mbox{non-negativity} $\mu^*(\varnothing) = 0$, $\mu^*(A)\ge 0$;
\item \mbox{monotonicity} \mbox{if} $A\subset B$, \mbox{then} $\mu^*(A)\le \mu^*(B)$;
\item \mbox{subadditivity} $\mu^*(\cup A_i) \le \sum\mu^*(A_i)$.
\end{enumerate}

\begin{proof}
\begin{enumerate}
\item Using any graduation except $\varnothing$ to cover $\varnothing$ produces the result as $0$, i.e., $\mu^*(\varnothing) = 0$. For another set $A$, if $A = \varnothing$, then $\mu^*(\varnothing) = 0$; if $A \ne \varnothing$, then there exists at least one element $x \in A$ to make the result bigger than $0$ at least in the sense of the graduation $1$, i.e., $\mu^*(A) \ge 0$.
\item If $A = \varnothing$, then it holds that $\mu^*(A)\le \mu^*(B)$. Since that $B = A+(B-A)$ and $A \cap (B-A) = \varnothing$, we have that $\mu^*(B)=\mu^*(A)+\mu^*(B-A)$. Using a graduation $g$ which is not $\varnothing$ but a size less than $|B|$, produces the size of $A$ as $\mu^*(A)=(g, 1)$, and produces the size of $B$ as $\mu^*(B)=(g, N(g))$ where $N(g) \ge 1$. Using a graduation $g$ larger than $|B|$, produces both $\mu^*(A)=(g, 1)$ and $\mu^*(B)=(g, 1)$. Above all, monotonicity holds.
\item Denote $\sum\mu^*(A_i)$ as $(r, N)$.Using the graduation less than $N$ produces $\mu^*(\cup A_i)\le \sum\mu^*(A_i)$, while using the graduation larger than $N$ produces $\mu^*(\cup A_i) = \sum\mu^*(A_i)$.
\end{enumerate}
\end{proof}

Then $\mu^*$ has the properties of outer measure.

In the graduation $1$, $\mu^*$ satisfies countable additivity.
\begin{equation}
\mu^*(\cup A_i) = \sum\mu^*(A_i),
\end{equation}
where $A_i \cap A_j = \varnothing$ if $i \ne j$.

\begin{proof}
If the graduation is $1$, then any element in $\cup A_i$ contributes $(r, 1)$ to $\mu^*(\cup A_i)$ in the left side of the equation, and any element in $A_i$ contributes $(r, 1)$ to $\mu^*(A_i)$ in the right side. Thus any element contributes the same size to both the left side and the right side. Since that $A_i \cap A_j = \varnothing$ if $i \ne j$, any element contributes only one time to both the left side and the right side. Then the equation holds.
\end{proof}


\section{Relation with cardinality}
\label{rwc}
\setcounter{equation}{0}

Definition \ref{eqofc} makes the definition of equivalence between cardinality as a special case. Recall that an one-to-one correspondence can be written as $a_i \leftrightarrow b_i$, i.e. $( \{a_i\}, 1)=(\{b_i\}, 1)$, where the cover to $\{a_i\}$ is a translation to the cover to $\{b_i\}$ if translation is not limited to addition. In other words, translation is a one-to-one correspondence using addition. Thus we have:
\begin{theorem}
$|S_1| = |S_2| \rightarrow \stackrel{=}{S_1} = \overset{=}{S_2}$
\end{theorem}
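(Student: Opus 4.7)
The plan is to unfold the chain of definitions until the hypothesis $|S_1| = |S_2|$ becomes a statement about element-level matchings, at which point the conclusion about cardinality falls out almost immediately from the remark preceding the theorem.

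First I would expand the hypothesis using Definition \ref{eqofsize}, which reduces the generic equality $|S_1|=|S_2|$ to the graduation-$1$ equivalence $|S_1|\overset{g=1}{=}|S_2|$. Applying Definition \ref{eqofc} then yields a cover $\emph{C}=\{C_i\}$ of $S_1$ and a cover $\emph{C}'=\{C'_j\}$ of $S_2$, with $\emph{C}$ and $\emph{C}'$ equivalent. Because graduation $1$ is $|\{0\}|$ and each $C_i$ must satisfy $C_i\cap S_1\ne\varnothing$ (cover axiom 4), each $C_i$ is a unit-size set determining a single element $a_i\in S_1$, and similarly each $C'_j$ a single element $b_j\in S_2$; the covering condition then forces $S_1=\{a_i:i\in I\}$ and $S_2=\{b_j:j\in J\}$.

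Second, I would extract a bijection from the equivalence of covers. For each $C_i\in\emph{C}\setminus(\emph{C}\cap\emph{C}')$ the equivalence guarantees a translation inside $\emph{C}'\setminus(\emph{C}\cap\emph{C}')$, i.e.\ a constant $e$ and an index $j$ with $b_j=a_i+e$, and the "vice versa" clause gives the converse direction; on the shared part $\emph{C}\cap\emph{C}'$ the identity pairing works trivially. Assembled together this is precisely the schema $\{a_i\}\leftrightarrow\{b_j\}$ noted just before the theorem, i.e.\ a one-to-one correspondence $\phi:S_1\to S_2$. Since $\stackrel{=}{S_1}=\stackrel{=}{S_2}$ is by definition the assertion that such a bijection exists, the implication is done.

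The main obstacle is the looseness of "each set has a translation" in the definition of equivalent covers: it asserts existence of a translation partner but does not by itself forbid two distinct $C_i$ from being matched to the same $C'_j$, so literally it only gives a total relation. To upgrade this to a genuine bijection I would need to invoke tightness of the graduation-$1$ cover: because graduation $1$ is the \emph{minimum} graduation (Axiom \ref{assofgradua}) and each singleton $\{a_i\}$ accounts for exactly one element of $S_1$, any double-counting would contradict the set-function role of $N(\emph{C})$ in reflecting the quantity of $C_i$. A secondary subtlety is the infinite case, where the translation constants $e$ vary with the pair and must be collected into a single function; this is handled by reading "translation is a one-to-one correspondence using addition" as licensing the pointwise assembly, exactly as the paragraph preceding the theorem already does.
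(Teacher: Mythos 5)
Your unfolding of the hypothesis follows the same route as the paper, which in fact offers no formal proof at all: its entire justification is the informal paragraph identifying a one-to-one correspondence $a_i \leftrightarrow b_i$ with the statement $(\{a_i\},1)=(\{b_i\},1)$ and declaring that ``translation is a one-to-one correspondence using addition.'' Your first two steps (reduce via Definition~\ref{eqofsize} to graduation-$1$ equivalence, read the graduation-$1$ cover elements as singletons picking out individual elements of $S_1$ and $S_2$, and try to assemble the translations into a bijection) are a more explicit rendering of exactly that argument.

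The gap you flag in your final paragraph is, however, genuine and your patch does not close it. The definition of equivalent covers only demands that each $C_i$ outside the common part \emph{have} a translation on the other side, and vice versa; this is a total relation in both directions, not an injection in either, so Definition~\ref{eqofc} at graduation $1$ is satisfied, for instance, by $S_1=\{a_1,a_2\}$ and $S_2=\{b\}$ (each singleton $\{a_i\}$ translates to $\{b\}$, and $\{b\}$ translates to $\{a_1\}$), even though $\stackrel{=}{S_1}\ne\overset{=}{S_2}$. Your proposed repair appeals to ``$N(\emph{C})$ reflecting the quantity of $C_i$'' and to graduation $1$ being minimal (Axiom~\ref{assofgradua}), but neither of these is given formal content anywhere in the paper that would forbid the many-to-one matching above; the set function $N(\emph{C})$ never enters Definition~\ref{eqofc} at all. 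So as written the implication does not follow from the stated definitions: one would have to strengthen the notion of equivalent covers (e.g.\ require a bijective pairing of cover elements by translations, or bring $N(\emph{C})$ explicitly into the equivalence) before the theorem can be derived. This defect is shared by the paper's own argument, which silently assumes the pairing is one-to-one; your write-up at least makes the missing step visible, but the ``tightness'' appeal is an assertion, not a proof.
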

From the view of measurement, operations except addition violate the existence of minimum of graduation. That is, if one intends the relation between sizes precise, with the line between indistinguishability and identity drawn, he has to measure the sizes using the same graduation.

\begin{definition}\label{defofCar}
Set $S$ has cardinality $\overset{=}{S} = (C, 1)$, where $C = S$.
\end{definition}
\begin{proof}
According to definition of cardinality's equivalence, $S1 = S2$ if there exists a bijection $f: S_1\to S_2$. Bijection is defined by such $s_{1i} \leftrightarrow s_{2i}$.
\end{proof}

Recall any arbitrary set $S$ has size $(S, 1)$, and let the cardinality denoted by $\overset{=}{S}$.
\begin{lemma}
$\overset{=}{S_1}=\overset{=}{S_2}$, \mbox{if} $S_1$ \mbox{in} $(S_1, 1)$ \mbox{can be put into an one-to-one correspondence with} $S_2$ \mbox{in} $(S_2, 1)$.
\end{lemma}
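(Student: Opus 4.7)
The plan is to chain together Definitions~\ref{defofCar}, \ref{eqofsize}, and \ref{eqofc}, together with the definition of equivalent covers, and then import the reinterpretation of ``translation'' emphasized in the paragraph immediately preceding the lemma. By Definition~\ref{defofCar}, $\overset{=}{S_1} = (S_1, 1)$ and $\overset{=}{S_2} = (S_2, 1)$, where the cover on each $S_i$ is $S_i$ itself. So the goal reduces to establishing the size identity $(S_1, 1) = (S_2, 1)$, which by Definition~\ref{eqofsize} is $|S_1| \overset{g=1}{=} |S_2|$, which by Definition~\ref{eqofc} asks for a cover equivalent to $C_1 := S_1$ that covers $S_2$.

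My first step would be to refine the covers to graduation $1$: decompose $C_1 = S_1$ into its singleton pieces $\{\{s\} : s \in S_1\}$, so that each element of $S_1$ is covered by one piece of size $(r,1)$, and do the same for $S_2$. This realizes the meaning of ``measuring in graduation $1$'' that $\overset{=}{S_i}$ depends on. Then I would invoke the hypothesis: a one-to-one correspondence between $S_1$ and $S_2$ is precisely a bijective pairing $\{s\} \leftrightarrow \{f(s)\}$ of the singleton pieces. For each singleton in the symmetric difference $C_1 - C_1\cap C_2$, the paired singleton in $C_2$ lies in $C_2 - C_1\cap C_2$, so the covers are equivalent in the sense required by the definition of equivalent covers; Definition~\ref{eqofc} then delivers the size identity, and with graduation $1$ it upgrades to true equality by Definition~\ref{eqofsize}.

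The main obstacle, and in fact the only nontrivial step, is that the earlier definition of \emph{translation} is stated with constants $e_1 \in \mathbb{R}$ acting by addition, whereas the bijection $f$ in the hypothesis need not be of that restricted form. I would handle this by appealing explicitly to the passage directly above the lemma, where the author generalizes translation: ``an one-to-one correspondence can be written as $a_i \leftrightarrow b_i$, i.e. $(\{a_i\},1) = (\{b_i\},1)$, where the cover to $\{a_i\}$ is a translation to the cover to $\{b_i\}$ if translation is not limited to addition.'' That is, in the present context of cardinality — where only the bijective pairing of singletons matters, not any metric operation — translation is weakened to arbitrary one-to-one correspondence. This is exactly the ``weak form of cardinality'' announced in the abstract. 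Once this interpretive convention is invoked, the proof is a direct unwinding of definitions and needs no further computation.
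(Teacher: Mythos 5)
There is a genuine gap, and it comes from working at the wrong level of cover. You reduce $\overset{=}{S_1}=\overset{=}{S_2}$ to the strict size identity $|S_1| \overset{g=1}{=} |S_2|$ of Definition~\ref{eqofsize}, realized by decomposing each $S_i$ into singleton pieces and pairing them via the bijection $f$. Two problems follow. First, the paired-singleton step fails as stated: it is not true that a singleton in $C_1 - C_1\cap C_2$ is sent by $f$ into $C_2 - C_1\cap C_2$. Take $S_1=\mathbb{N}$, $S_2=\mathbb{N}\setminus\{0\}$, $f(n)=n+1$: here $C_2 - C_1\cap C_2=\varnothing$ while $C_1 - C_1\cap C_2=\{\{0\}\}$, so no translation in the required set exists and the singleton covers are simply not equivalent under the paper's definition of equivalent covers. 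Second, and more tellingly, the paper itself warns against exactly this reduction: in the remark after Definition~\ref{eqofsize} it states that $\{n:n\in\mathbb{N}\}$ and $\{2n:n\in\mathbb{N}\}$ are \emph{not} equivalent in the scale of $1$, only ``indistinguishable in the sense of $\mathbb{N}$,'' even though a one-to-one correspondence exists. If your reduction were the right reading, the lemma would contradict that remark.

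The paper's intended argument stays at the level of the whole-set cover. By Definition~\ref{defofCar}, $\overset{=}{S}=(C,1)$ with $C=S$, i.e.\ the cover is the single set $S$ counted once, not the family of singletons. The hypothesis that $S_1$ can be put into one-to-one correspondence with $S_2$ is then used, via the paragraph preceding the lemma (``translation is not limited to addition'') and the proof sketch attached to Definition~\ref{defofCar} (a bijection decomposes as $s_{1i}\leftrightarrow s_{2i}$), to say that $S_1$ is a generalized translation of $S_2$; hence the one-element covers $\{S_1\}$ and $\{S_2\}$ are equivalent and both counts equal $1$, giving $(S_1,1)=(S_2,1)$ directly as an equivalence of cardinalities, not as the graduation-$1$ identity of Definition~\ref{eqofsize}. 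Your appeal to the generalized notion of translation is the right ingredient, but it must be applied to the covers $\{S_1\}$, $\{S_2\}$ themselves; routing the argument through singleton covers and Definitions~\ref{eqofsize}--\ref{eqofc} proves a stronger statement that is false in the paper's own framework.
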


In this system of size, consider the corresponding Continuum Hypothesis (CH). The first infinity in CH is $\omega$, and $\omega$ is an element in $\mathfrak{M}$ according to the definition of $\omega$. Then the set of natural number has the size $(r, \omega)$, with $r$ arbitrarily small. The set of $[0, 1]$ has the size $(r, 1/r)$, with $r$ arbitrarily small. Assume that the equity $|P(A)| = 2^{|A|}$ always holds, then the corresponding CH is:
\begin{equation}
(r, 2^\omega) = (r, 1/r) = 1.
\end{equation}
By $(r, 2^\omega) = \omega (r, 2)$, we obtain the dimension of the set $\{1, 2\}$.
\begin{equation}
\omega (r, 2) = 1.
\end{equation}
It has been believed that the first infinity is $\omega$, the second is $2^\omega$, and the third is $2^{2^\omega}$. Then the third infinity has the dimension:
\begin{equation}
(r, 2^{2^\omega}) = 2^\omega (r, 2) = 2^\omega/\omega.
\end{equation}

In summary, the corresponding CH and the equity $|P(A)| = 2^{|A|}$ yield the corresponding General Continuum Hypothesis as follows:
\begin{theorem}
\mbox{The increasing sequence of infinities has the dimensions as follows:}
\centerline{$ln \omega / ln (1/r)$, $1$, $2^\omega/\omega$, $2^{2^\omega}/\omega$, \ldots}
\end{theorem}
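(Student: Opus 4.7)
The plan is to unwind Definition~\ref{defofD} for each infinity in turn, substitute the pair-form Continuum Hypothesis already derived, and then identify a stable inductive pattern. All four listed entries should fall out of the same algebraic manipulation on pairs, so the argument amounts to bookkeeping rather than new content.

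First I would handle $\omega$ directly: the set of natural numbers carries the pair $(r,\omega)$, so Definition~\ref{defofD} assigns it the dimension $\ln\omega/\ln(1/r)$, which is the first entry. Next, for $2^\omega$, the pair-form Continuum Hypothesis just established gives $(r,2^\omega)=1$ immediately, supplying the second entry. For $2^{2^\omega}$, I would apply the scalar-multiplication rule $c\cdot y=(r,N_y^{c})$ with $c=2^\omega$ and $y=(r,2)$ to rewrite $(r,2^{2^\omega})=2^\omega\cdot(r,2)$; combining with the identity $\omega(r,2)=1$, equivalently $(r,2)=1/\omega$, yields dimension $2^\omega/\omega$. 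The fourth entry, $(r,2^{2^{2^\omega}})=2^{2^\omega}/\omega$, follows by the same rewriting one level higher.

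To pass from these initial cases to the full sequence, I would induct on the height of the tower of exponentials. Set $T_0=\omega$ and $T_n=2^{T_{n-1}}$; then for $n\ge 2$ the scalar-multiplication rule gives $(r,T_n)=T_{n-1}\cdot(r,2)=T_{n-1}/\omega$, which reproduces the pattern $2^\omega/\omega$, $2^{2^\omega}/\omega$, and so on, while the base cases $n=0$ and $n=1$ are exactly the two entries handled above. Arranging these values in order of the underlying cardinality produces the sequence stated in the theorem.

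The hard part will be justifying the step $(r,2^{T_{n-1}})=T_{n-1}\cdot(r,2)$ as a legitimate application of the scalar-multiplication rule. That rule was formulated with $c\in\mathbb{R}$, whereas here $c=T_{n-1}$ is an infinite element of $\mathfrak{M}$, so I would need either to argue that the rule extends to all of $\mathfrak{M}$ (consistent with the closure of $\mathfrak{M}$ under the operations listed after Axiom~\ref{defofM}) or, equivalently, to verify directly from Definition~\ref{defofD} that $\log(N^{c})/\log(1/r)=c\cdot\log(N)/\log(1/r)$ remains formally valid for infinite $c$. Once that extension is granted, the remaining manipulations are routine substitutions and the claimed dimensions follow.
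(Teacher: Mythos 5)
Your proposal follows essentially the same route as the paper: the paper's justification is precisely the preceding derivation, assigning $(r,\omega)$ the dimension $\ln\omega/\ln(1/r)$ via Definition~\ref{defofD}, taking $(r,2^\omega)=(r,1/r)=1$ from the pair-form CH, and then using $(r,2^{T_{n-1}})=T_{n-1}\cdot(r,2)$ together with $\omega(r,2)=1$ to get $T_{n-1}/\omega$ for the higher infinities. Your explicit induction on the exponential tower and your flagging of the need to extend the scalar-multiplication rule from $c\in\mathbb{R}$ to infinite $c\in\mathfrak{M}$ are points the paper glosses over, but the underlying argument is the same.
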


\vspace*{-1pt}
\section{Conclusion}
\label{conclu}
\setcounter{equation}{0}

We generalize measurement to be strong enough to measure the size of set. This measurement can vary itself with different graduations, thus it unifies the measurements with different graduations. In relation to cardinality, it constrains the one-to-one correspondence on addition, due to the existence of minimum of graduation. In relation to measure theory, it has the properties of outer measure, and even has the properties of measure if the graduation is $1$. This measurement meets the need in the generalized problem of dimension reduction, and establishes connections between dimension, cardinality and measure theory.

However, in \cite{Solovay1970} it has been shown that in Zermelo-Frankel set theory, the existence of a non-Lebesgue measurable set cannot be proved if there is an inaccessible cardinal. This suggests some connections between the size in this paper and inaccessible cardinal in cardinal theory. Moreover, in smooth infinitesimal analysis (see \cite{Hellman2006}) the infinitesimal behaves similarly to the size of a finite set.

In three aspects the size in this paper is related to mathematical philosophy or mathematical logic. First, the generalized measurement doesn't stand aside with either potential infinity or actual infinity (see \cite{Benacerraf2004}). It permits actual-infinite operations, since that an interval cannot be cut as points without the help of actual-infinite operations. For an infinite set $S$, it holds the equity of sizes between part and whole in the cases where the graduation is larger than $|S|$, a characteristic of actual infinity, while it rejects the equity in the cases where the graduation is less than $|S|$, a characteristic of potential infinity. Second, the generalized measurement distinguishes the difference between \textit{any} and \textit{all} (see \cite{Russell1908}). For a set $S$, it corresponds \textit{any} with graduation $1$, and corresponds \textit{all} with a graduation $g \le |S|$. Third, for points which are non-empty but measure zero in measure theory, how to construct the interval which has non-zero measure in measure theory? A point is measure zero in the graduation of interval, but non-empty in the graduation of $1$. This constructing problem is referred to two statements from different graduations.

\section*{Acknowledgements}
This work was supported by the National Natural Science Foundation of China under Grant 61036008.

\end{document}